\title{Majority-logic Decoding with Subspace Designs}
\author{Romar dela Cruz\thanks{Institute of Mathematics, University of the Philippines Diliman, Quezon City, Philippines,
currently at Mathematisches Institut,
University of Bayreuth, D-95440 Bayreuth, Germany.
\texttt{email:~rbdelacruz@math.upd.edu.ph}.
The work of R. dela Cruz is supported by the Alexander von Humboldt Foundation.}
\and
Alfred Wassermann\thanks{Mathematisches Institut,
University of Bayreuth, D-95440 Bayreuth, Germany.
\texttt{email:~alfred.wassermann@uni-bayreuth.de}. This paper was presented in part at the Oberwolfach Workshop 1912 ``Contemporary Coding Theory''.}}
\newcommand{\F}{\mathbb{F}}
\newcommand{\cB}{\mathcal{B}}
\newcommand{\cD}{\mathcal{D}}
\newcommand{\cA}{\mathcal{A}}
\newcommand{\cF}{\mathcal{F}}
\newcommand{\cG}{\mathcal{G}}
\newcommand{\cP}{\mathcal{P}}
\newcommand{\cR}{\mathcal{R}}
\newcommand{\qbinom}[3]{\genfrac{[}{]}{0pt}{}{#1}{#2}_{#3}}
\DeclareMathOperator{\rank}{rank}
\DeclareMathOperator{\AG}{AG}
\DeclareMathOperator{\PG}{PG}
\newtheorem{defini}{Definition} 
\newtheorem{thm}[defini]{Theorem}
\newtheorem{rem}[defini]{Remark}
\begin{document}
\maketitle

\begin{abstract}
Rudolph (1967) introduced one-step majority logic decoding for linear codes
derived from combinatorial designs.
The decoder is easily realizable in hardware and requires that
the dual code has to contain the blocks of
so called geometric designs as codewords.
Peterson and Weldon (1972) extended Rudolphs algorithm to a two-step
majority logic decoder correcting the same number of errors than
Reed's celebrated multi-step majority logic decoder.

Here, we study the codes from subspace designs.
It turns out that these codes have the same majority logic decoding capability
as the codes from geometric designs, but their majority logic decoding complexity
is sometimes drastically improved.
\end{abstract}

\section{Introduction}
In \cite{Rudolph1967}, a simple decoding method based on majority decision for linear codes is presented.
Its attraction lies in the easy realization in hardware and it requires that
the dual code has to contain the blocks of a $t$-design, $t\ge 2$, as codewords.

Ever since then, people studied the linear codes generated by the blocks of $t$-designs.
In order to get a good code it is desirable that the rank of the block-point incidence
matrix of the design is small over some finite field.
The famous Hamada conjecture states that \emph{geometric designs}, which consist
of the set of all $k$-subspaces in PG$(v,q)$ or $k$-flats in AG$(v,q)$, minimize the $p$-rank for
a prime power $q=p^s$.

Here, a simple observation on the codes from subspace designs---also known as
$q$-analogs of designs---is reported.
It will turn out that these codes have the same majority logic decoding capability
as the codes from geometric designs, but their decoding complexity is improved.

This may be of interest when implementing error correction with
nano-scale technologies~\cite{ldpc2013}.

\section{Combinatorial designs}
For a finite set $V$ of cardinality $v$, the notion of a design goes back to
Pl\"ucker, Kirkman and Steiner in the 19th century.

\begin{defini}\label{def:design}
Let $0\leq t\leq k\leq v$ be integers and $\lambda$ a non-negative integer.
A pair $\cD=(V,\cB)$, where $\cB$ is a collection of subsets of cardinality $k$ (\emph{blocks}) of $V$,
is called a \emph{$t$-$(v,k,\lambda)$ design} on $V$
if each subset of cardinality $t$ of $V$ is contained in exactly $\lambda$ blocks.

If $\cB$ is a set, i.e.~if every $k$-subset appears at most once in $\cB$, the design is called \emph{simple}.
\end{defini}

It is well known, see e.g. \cite[1\S3, Thm. 3.2]{beth1999design}, that every $t$-$(v,k,\lambda)$ design is
also an
$s$-$(v, k, \lambda_s)$ design for $0\leq s \leq t$, where
\[
\lambda_s = \lambda\frac{\binom{v-s}{t-s}}{\binom{k-s}{t-s}}\,.
\]
As a consequence,
a $t$-$(v,k,\lambda)$ design consists of $b=\lambda \binom{v}{t} / \binom{k}{t}$ blocks and every point
$P\in V$ appears in $r = \lambda \binom{v - 1}{t - 1} / \binom{k - 1}{t - 1}$ blocks.
$r$ is called \emph{repetition number}.

Rudolph \cite{Rudolph1967} suggested to use the
rows of a $b\times v$ blocks-points-incidence matrix $N_\cD$ of a $2$-$(v,k,\lambda)$  design $\cD$
as parity check equations for a linear code $C_\cD$ over $\F_p$.
In other words, the rows of $N_\cD$ span the dual code
$C_\cD^\perp$ of length $v$ over $\F_p$.
In~\cite{Rudolph1967,Ng1970} it is shown that
with \emph{one-step majority logic decoding}
the number of errors which can be decoded is equal to
$
	\lfloor (r + \lambda - 1) / 2\lambda\rfloor
$.
We note that for each coordinate of a received word the decoder uses those
$r$ parity check equations that contain that coordinate plus one additional equation.
Thus, the complexity of the decoder is dominated by the repetition number $r$ of the design.
\cite{RahmanBlake1973} extended the analysis of the majority logic decoder to designs with arbitrary $t\ge 2$,
see \cite[p. 686]{Tonchev-2007} for a survey.

Let $R$ be the rank of $N_\cD$ over $\F_p$ (the \emph{$p$-rank of $\cD$}).
Then $C_\cD^\perp$ is a linear $[v, R]_p$ code and $C_\cD$ is a $[v, v-R]_p$ code.
This suggests that it is interesting to search for designs $\cD$ with small $p$-rank.
The following theorem by Hamada shows that only the codes of designs with
a special restriction on the parameters may be interesting.

\begin{thm}[\cite{hamada1973}]
Let $N$ be the incidence matrix of a $2$-design with parameters
$v$, $k$, $\lambda$, $r$, $b$, and let $p$ be a prime.
\begin{itemize}
\item If $p$ does not divide $r(r-\lambda)$, then $\rank_p N = v$.
\item If $p$ divides $r$ but does not divide $r-\lambda$, then $\rank_p N \ge v-1$.
\item If $\rank_p N < v-1$, then $p$ divides $r-\lambda$.
\end{itemize}
\end{thm}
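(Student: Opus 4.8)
The plan is to exploit the fundamental identity $N^{T}N = (r-\lambda)I_{v} + \lambda J_{v}$, valid over $\mathbb{Z}$ (hence over $\F_p$) for the incidence matrix of any $2$-$(v,k,\lambda)$ design, where $I_v$ is the identity and $J_v$ the all-ones $v\times v$ matrix: the diagonal entries of $N^{T}N$ count the blocks through a single point (which is $r=\lambda_1$, since a $2$-design is also a $1$-design), and the off-diagonal entries count the blocks through a pair of points (which is $\lambda$). I will also use the two elementary counting identities $bk = vr$ (count incidences in two ways) and $r(k-1) = \lambda(v-1)$ (the $\lambda_1$-formula), the latter rewritten as $rk = (r-\lambda) + \lambda v$.

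The heart of the argument is the case $p \nmid r-\lambda$, which I claim already forces $\rank_p N \ge v-1$; this settles the second and third bullets at once, the third being the contrapositive. Indeed, if $x \in \F_p^{v}$ lies in the kernel of $N$, then $N^{T}Nx = 0$, so $(r-\lambda)x + \lambda(\mathbf{1}^{T}x)\,\mathbf{1} = 0$; since $r - \lambda$ is invertible in $\F_p$ this shows that $x$ is a scalar multiple of the all-ones vector $\mathbf{1}$. Hence $\ker_{\F_p} N \subseteq \langle \mathbf{1}\rangle$, a space of dimension at most one, so $\rank_p N \ge v-1$.

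For the first bullet I would add the hypothesis $p \nmid r$ and show that, together with $p\nmid r-\lambda$, it forces $p \nmid k$; combined with the previous paragraph this upgrades the bound to $\rank_p N = v$, because $N\mathbf{1} = k\,\mathbf{1}_{b}$ is then nonzero, so $\mathbf{1} \notin \ker N$ and the kernel is trivial. To see $p\nmid k$: suppose $p \mid k$. From $bk = vr$ we get $p \mid vr$, and since $p \nmid r$ this gives $p \mid v$; reducing $rk = (r-\lambda)+\lambda v$ modulo $p$ then yields $0 \equiv r-\lambda$, contradicting $p \nmid r-\lambda$.

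I expect the only genuine subtlety to be this last number-theoretic bookkeeping in the first case — checking that $p\nmid r(r-\lambda)$ really does exclude $p\mid k$ — since everything else collapses to the single linear-algebraic observation about $N^{T}N$ and its action on the all-ones vector. No structural feature of the design beyond the two-design counting identities is needed.
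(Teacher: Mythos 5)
Your proof is correct. The paper does not actually prove this theorem---it is quoted from Hamada (1973) without proof---but your argument is the standard one: the identity $N^{T}N=(r-\lambda)I_{v}+\lambda J_{v}$ gives $\ker_{\F_p}N\subseteq\langle\mathbf{1}\rangle$ whenever $p\nmid r-\lambda$ (settling the second and third bullets), and your number-theoretic step showing that $p\nmid r(r-\lambda)$ forces $p\nmid k$, hence $N\mathbf{1}=k\mathbf{1}_b\neq 0$ and a trivial kernel, correctly closes the first bullet.
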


\section{Geometric designs and their codes}
For certain designs derived from finite geometry it is known that
their $p$-rank is smaller than $v-1$.
These designs are the so called \emph{geometric or classical designs}, \cite{hamada1968, beth1999design}.

Let $q$ be a prime power $p^m$ and $V$ be a vector space of finite dimension $v$ over the finite field $\F_q$.
For $0\leq k \leq v$, we denote the set of $k$-dimensional subspaces of $V$ with
$
\qbinom{V}{k}{q} = \{ U \le V\mid \dim{U} = k\}
$.
The cardinality of $\qbinom{V}{k}{q}$ can be expressed by the
\emph{Gaussian coefficients}
$
	\#\qbinom{V}{k}{q} = \qbinom{v}{k}{q} = \frac{(q^v-1)\cdots(q^{v-k+1}-1)}{(q^k-1)\cdots(q-1)}\text{.}
$

\subsection{Projective case}
Taking
$\cP = \qbinom{V}{1}{q}$ as set of points
and $\cB = \qbinom{V}{k}{q}$ as set of blocks,
then it is well known \cite[1\S2]{beth1999design} that $\cG = (\cP, \cB)$ is a
\[
	2\text{-}(\qbinom{v}{1}{q}, \qbinom{k}{1}{q}, \qbinom{v - 2}{k - 2}{q})
\]
design. $\cG$ is called \emph{geometric or classical design}.
We note that $r= \qbinom{v-1}{k-1}{q}$ and $b= \qbinom{v}{k}{q}$.
In the language of finite geometry, the points of the geometric design are the points
of $\PG(v-1, q)$ and the blocks are the $(k-1)$-subspaces of $\PG(v-1, q)$.

The $p$-rank for a geometric design $\cG$ has been determined by Hamada ~\cite{hamada1968}:
\begin{equation}
\rank_p N_\cG = \sum_{s_0,\ldots,s_{m-1}}
	\prod_{j=0}^{m-1} \sum_{i=0}^{L(s_{j+1},s_j)}
	(-1)^i \binom{v}{i}\binom{v-1+s_{j+1}p-s_j-ip}{v-1},
\label{eq:hamada}
\end{equation}
where $s_m = s_0$,
$k \leq s_j \leq  v$, $0\leq s_{j+1}p-s_j \leq v(p-1)$, and
$L(s_{j+1},s_j) = \lfloor (s_{j+1}p-s_j)/p\rfloor$.

The code $C_\cG$ is called \emph{Projective Geometry code} (PG code),
see e.g. \cite{PetersonWeldon1972}.
It's minimum distance is at least $\qbinom{v-k+1}{1}{q}$ \cite[Thm. 5.7.9]{assmus_key_1992}.

\subsection{Affine case}
Similarly, the points  and the $k-1$-flats of the affine geometry $\AG(v-1, q)$ form a
\[
   2\text{-}(q^{v-1}, q^{k-1}, \qbinom{v-2}{k-2}{q})
\]
design $\cA$. It is called \emph{geometric design}, too.
The rank of $\cA$ is related to that of the projective case, see~\cite{hamada1968}.

The code $C_\cA$ is known as \emph{Euclidean Geometry code} (EG code),
see \cite{PetersonWeldon1972}.
It's minimum distance is at least $2q^{v-k}$ \cite[Thm. 5.7.9]{assmus_key_1992}.

\subsection{The binary case}
It is well known that for $p=q=2$
$
    C_\cA = \cR(k-1, v)
$,
i.e. the \emph{$(k-1)$th-order Reed-Muller code}
of length $2^v$ and minimum distance $2^{v-k+1}$.
Also,
$
    C_\cG = \cR(k-1, v)^*\cap {\F_2\mbox{\j}}^\perp
$,
i.e. the subcode consisting of the even-weight codewords of the \emph{punctured
$(k-1)$th-order Reed-Muller code}
of length $2^v-1$ and minimum distance $2^{v-k+1}$.
For $p = q = 2$ equation (\ref{eq:hamada}) simplifies to
$
	\rank_2 N_\cG = \sum_{i=0}^{v-k} \binom{v}{i}
$,
see \cite[p. 151]{assmus_key_1992}.

\section{Subspace designs}
Subspace designs---also called $q$-analogs of designs---were introduced
independently by Ray-Chaudhuri, Cameron, Delsarte in the early 1970s compare ~\cite{qdesigns2017}.

Let $q$ be a prime power $p^m$ and $V$ be a vector space of finite dimension $v$ over the finite field $\F_q$.

\begin{defini}\label{def:qdesign}
Let $0\leq t\leq k\leq v$ be integers and $\lambda$ a non-negative integer.
A pair $\cD=(V,\cB)$, where $\cB$ is a collection of $k$-subspaces (\emph{blocks}) of $V$, is called a
\emph{$t$-$(v,k,\lambda)_q$ subspace design} on $V$
if each $t$-subspace of $V$ is contained in exactly $\lambda$ blocks.

If $\cB$ is a set, i.e.~if every $k$-subspace appears at most once in $\cB$, the design is called \emph{simple}.
\end{defini}

The first nontrivial subspace design for $t\geq 2$ was constructed by Thomas (1987), the
first (and so far only known) nontrivial $t$-$(v, k, 1)_q$ subspace designs
(called $q$-Steiner systems) were constructed recently~\cite{fmp:10491987}.

In the rest of this note, all designs---combinatorial designs and subspace designs---will be simple
and we will omit mentioning this.
In order to distinguish subspace designs from those from Definition~\ref{def:design}, we will call the latter
\emph{combinatorial designs}.

A few well known facts---see \cite{qdesigns2017} for an overview---show the analogy between
combinatorial designs and subspace designs:
Let $\cD$ be a $t$-$(v, k, \lambda)_q$ design.
In general, for $0\le s \le t$, $\cD$ is also a
	$s$-$(v, k, \lambda_s)_q$ design, where
\[
\lambda_i = \lambda \frac{\qbinom{v-i}{t-i}{q}}{\qbinom{k-i}{t-i}{q}}\,.
\]
As a consequence, $\cD$ consists of
	$b = \lambda\qbinom{v}{t}{q} / \qbinom{k}{t}{q}$ blocks and
every $1$-dimensional
	subspace appears in $r =  \lambda\qbinom{v - 1}{t - 1}{q} / \qbinom{k- 1}{t-1}{q}$ blocks of $\cD$.

It is well known that $t$-$(v,k,\lambda)_q$ subspace designs gives rise to combinatorial
designs as is summarized in the following theorem.
\begin{thm}\label{thm:designparams}
Let $\cD=(V,\cB)$ be a $t$-$(v,k,\lambda)_q$ subspace design.
We define the following two combinatorial designs:
\begin{itemize}
	\item $\cD_p = (\qbinom{V}{1}{q}, \{\qbinom{B}{1}{q} \mid B\in\cB\})$ (projective version)
	\item For any fixed hyperplane $H\in\qbinom{V}{v-1}{q}$
	\[
		\cD_a = \left(\qbinom{V}{1}{q} \setminus \qbinom{H}{1}{q}, \left\{\qbinom{B}{1}{q} \setminus \qbinom{H}{1}{q} \mid B\in\cB,
        B \not \leq H\right\}\right)
	\]
    (affine version)
\end{itemize}
\begin{enumerate}
\item[a)] If $t = 2$, then $\cD_p$ is a combinatorial $2$-$(\qbinom{v}{1}{q},\qbinom{k}{1}{q},\lambda)$ design and
\item[b)] $\cD_a$ is a combinatorial $2$-$(q^{v-1},q^{k-1},\lambda)$ design.
\item[c)] In case $q=2$ and $t=3$, $\cD_a$ is even a combinatorial $3$-$(q^{v-1},q^{k-1},\lambda)$ design.
\item[d)]
In case $q=2$, a $2$-$(v,k,\lambda)_2$ design is also a combinatorial $3$-$(2^v, 2^k, \lambda)$ design.
\end{enumerate}
\end{thm}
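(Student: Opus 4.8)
The plan is to derive all four parts from one principle: a block $B\in\cB$ of the subspace design---or, in part d), a translate of such a $B$---contains a prescribed configuration of points of the derived combinatorial design if and only if a fixed subspace $U\le V$ of dimension $2$ or $3$ is contained in $B$; since $\cD$ is a $2$- or $3$-design, the number of such $B$ equals the corresponding constant $\lambda$ (here one uses $\lambda_2=\lambda$ when $t=2$, and the $3$-design property when $t=3$). One also checks that distinct blocks $B$ produce distinct blocks of the derived design: for $\cD_p$ a block of $\cD$ is the union of the $1$-subspaces in the corresponding block of $\cD_p$; for $\cD_a$ it is their span, because $B\cap H$ is a hyperplane of $B$ when $B\not\le H$, so every $1$-subspace of $B$ lying in $H$ is a difference of two lying outside $H$; for part d) a flat determines the subspace it translates. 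Thus counting blocks of the derived design coincides with counting the relevant $B\in\cB$. The point counts and block sizes follow from $\qbinom{v}{1}{q}-\qbinom{v-1}{1}{q}=q^{v-1}$, $\qbinom{k}{1}{q}-\qbinom{k-1}{1}{q}=q^{k-1}$, together with $\dim(B\cap H)=k-1$ for $B\not\le H$.

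For parts a) and b) (where $t=2$), take two distinct points. In $\cD_p$ these are distinct $1$-subspaces $P_1,P_2$; a block of $\cD_p$ coming from $B$ contains both iff $P_1+P_2\le B$, and $U:=P_1+P_2$ is a $2$-subspace, so exactly $\lambda$ blocks $B$ qualify. In $\cD_a$ the two points are $1$-subspaces $P_1,P_2\not\le H$; then $U=P_1+P_2$ still has dimension $2$ and, since $P_1\not\le H$, also $U\not\le H$, so any $B\ge U$ automatically satisfies $B\not\le H$, and membership of $P_1,P_2$ in $\qbinom{B}{1}{q}\setminus\qbinom{H}{1}{q}$ is equivalent to $U\le B$. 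Hence $\cD_a$ is a $2$-$(q^{v-1},q^{k-1},\lambda)$ design.

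For part c), where now $\cD$ is a $3$-$(v,k,\lambda)_2$ design, we exploit that over $\F_2$ the complement $V\setminus H$ is a single coset $w+H$ with $w\notin H$, so the sum of any two of its elements lies in $H$. Identifying the points of $\cD_a$ with nonzero vectors outside $H$, take three distinct such vectors $x_1,x_2,x_3$. The only possible nontrivial linear relation among them, $x_1+x_2+x_3=0$, would give $x_3=x_1+x_2\in H$, a contradiction; hence $x_1,x_2,x_3$ are linearly independent, $U:=\langle x_1,x_2,x_3\rangle$ is a $3$-subspace, and $U\not\le H$. A block of $\cD_a$ from $B$ contains all three points iff $U\le B$, so by the $3$-design property exactly $\lambda$ blocks $B$ qualify (again $U\not\le H$ forces $B\not\le H$). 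Therefore $\cD_a$ is a combinatorial $3$-$(2^{v-1},2^{k-1},\lambda)$ design.

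For part d) we read a $2$-$(v,k,\lambda)_2$ design $\cD$ inside $\AG(v,2)$: the points are the $2^v$ vectors of $V$ and the blocks are all translates $x+B$ with $B\in\cB$, each a $k$-flat of size $2^k$. Given three distinct vectors $x_1,x_2,x_3$, put $W:=\langle x_2-x_1,\,x_3-x_1\rangle$; over $\F_2$ the two generators are distinct and nonzero, hence independent, so $W$ is a $2$-subspace. A flat $x+B$ contains $\{x_1,x_2,x_3\}$ iff $x_1-x\in B$ and $W\le B$; thus each $B\in\cB$ with $W\le B$ contributes exactly the single flat $x_1+B$, and every flat containing the triple arises this way. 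By the $2$-design property there are exactly $\lambda$ such $B$, so $\cD$ becomes a combinatorial $3$-$(2^v,2^k,\lambda)$ design. I expect the main obstacle to be recognizing that d) must be phrased affinely: the bare subspaces of $\cD$, viewed as $2^k$-subsets of $V$, do \emph{not} form a $3$-design, because a triple of linearly independent nonzero vectors lies in $B$ iff the \emph{$3$-subspace} they span is contained in $B$, and a $2$-design imposes no control on the number of blocks through a fixed $3$-subspace; passing to translates replaces the triple by its $2$-dimensional direction space $W$, which is exactly what the $2$-design governs. The analogous $\F_2$-phenomenon used in c)---that three distinct points off a hyperplane always span a $3$-space---is the other spot requiring genuine care, while parts a) and b) are routine.
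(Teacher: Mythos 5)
Your proof is correct; for part d) --- the only part the paper actually proves rather than cites --- your argument (passing to the $2^{v-k}$ translates of each block and sending a triple $\{a,b,c\}$ to the $2$-subspace $\langle b-a, c-a\rangle$, which lies in exactly $\lambda$ blocks) is exactly the paper's, down to the observation that a flat determines its direction subspace. Parts a)--c) are merely referenced to the literature in the paper, and your self-contained arguments for them are the standard ones and are sound, including the key $\F_2$-point in c) that three distinct points off a hyperplane must be linearly independent because the sum of two vectors outside $H$ lands inside $H$.
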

\begin{proof}
For a)--c) see \cite{qdesigns2017}.

d) is a generalization of a result by Etzion and Vardy \cite{etzionVardy2011} for the case $\lambda=1$:

Let $V = \F_2^v$. The vector space $V$ consists of $2^v$ elements and every $k$-dimensional subspace $B$ contains
$2^k$ elements including $0$. Additionally, there exist $2^{v-k}$ flats parallel to $B$, denoted by $\cF_B$.
On the other hand, every tripel $\{a,b,c\}\subset V$ can be mapped to a unique two-dimensional subspace spanned by
$\ell := \langle b-a, c-a\rangle \leq V$. Then, $\ell$ is contained in exactly $\lambda$ blocks $b\in\cB$ and
$\{a,b,c\}$ is contained in exactly $\lambda$ flats in $\bigcup_{B\in\cB} \cF_B$.

This proves that $(V, \bigcup_{B\in\cB} \cF_B)$ is a combinatorial $3$-$(2^v, 2^k, \lambda)$ design.
\end{proof}

\begin{rem}
\begin{enumerate}
\item
Rahman and Blake \cite{RahmanBlake1973} analyzed the majority logic decoding capability
for combinatorial designs with $t>2$.

\item
As subspace design, the set of blocks of a geometric design with the above parameters
is the trivial
\[
	t\text{-}(v, k, \qbinom{v - t}{k - t}{q})_q
\]
subspace design for all $0\leq t \leq k$, compare~\cite{qdesigns2017}.
\end{enumerate}
\end{rem}

\section{One-step majority logic decoding with subspace designs}\label{Sec:onestep}
Let $\cD$ be a $t$-$(v, k, \lambda)_q$ subspace design.
Then, in the projective case $\cD$ can be regarded as combinatorial
$2\text{-}(\qbinom{v}{1}{q}, \qbinom{k}{1}{q}, \lambda_2)$
design. The rows of it's blocks-points-incidence matrix $N_\cD$ are a subset of the rows of the
incidence matrix $N_\cG$ of the
$2\text{-}(\qbinom{v}{1}{q}, \qbinom{k}{1}{q}, \qbinom{v - 2}{k - 2}{q})$ geometric design $\cG$.

Now, the simple observation is that if rows are removed from a matrix,
it's rank either stays constant or becomes smaller. Therefore:
\[
\rank_p N_\cD \le \rank_p N_\cG \,.
\]
So far, in all tested subspace designs for $q=p= 2$ we had
\[
\rank_2 N_\cD = \rank_2 N_\cG = \sum_{i=0}^{v-k} \binom{v}{i} \,.
\]

We conclude that codes $C_\cD$ from subspace designs are either the same codes as the codes $C_\cG$ from the
corresponding geometric designs or contain these codes.

What about the error correction capability of the one-step majority logic decoder?
The number of errors $\ell$ which can be corrected by
one-step majority logic decoding of a $2$-$(v,k,\lambda)$ design is
$
\ell = \lfloor \frac{r + \lambda - 1}{2\lambda} \rfloor
$.
Using $r = \lambda \binom{v - 1}{t - 1} / \binom{k - 1}{t - 1}$, $\ell$ can be bounded by
\[
\lfloor(\frac{q^{v-1}-1}{q^{k-1}-1} - 1) / 2\rfloor
\leq \ell \leq
\lfloor(\frac{q^{v-1}-1}{q^{k-1}-1} - 1) / 2 + \frac{1}{2\lambda}\rfloor\,.
\]
So, in fact, $\lambda$ cancels out and we see that the choice of $\lambda$ is irrelevant
for the error-correction capability of the code.

The advantage of taking a subspace design with small $\lambda$ over the trivial design
is in the reduced complexity of the decoder. For every coordinate
of a received word,
labeled by the $1$-dimensional subspaces of $V$ (the points),
the decoder runs through
those $r$ blocks of the design which contain that point.
Therefore, subspace designs with small values of $\lambda$ are preferable and
the trivial subspace design is clearly the worst choice
since it attains the maximal value of $\lambda$.
In case of $ \lambda=1$, the check equations are \emph{orthogonal}, i.e.
each coordinate pair appears in exactly one check equation.

The observation on the rank of a subspace design is also true for affine subspace designs
and for small values of $\lambda$ the resulting codes will have efficient decoders with
the same capabilities as those from the geometric designs.

The rapid growth of the number $r$ with increasing $v$ is the reason why for practical purposes among the
geometric designs, mostly the $2$-$(v, 2, 1)_q$ designs have been considered.
By using subspace designs the choice for suitable codes is much larger.

In  Tables~\ref{tab:projcodes2}--\ref{tab:q4codes} the parameters of the codes from known small
subspace designs are listed.
$\lambda_{\mbox{\tiny  known}}$ is the minimal value of $\lambda$ for which a subspace design
$\cD$ is known to exist,
$\lambda_{\min}$ is the minimal value of $\lambda$ that satisfies the necessary conditions
and $\lambda_{\max}$ is the value of $\lambda$ of the geometric design.
$r$ is the repetition number for $\lambda_{\mbox{\tiny  known}}$.
The next column contains the parameters of the resulting linear code $C_\cD$.
$n$ is the length of the code, dim is the dimension and $\ell$ is the number of errors
which can be corrected by one-step majority logic decoding according to
Rudolph, Ng~\cite{Rudolph1967,Ng1970} or Rahman, Blake~\cite{RahmanBlake1973}.

The column $r_{\max}/r$ shows the reduction factor for the number of parity check equations
if taking the subspace design with the smallest
known $\lambda$ against
taking the geometric design, i.e. it is the ratio between the entries of
$\lambda_{\max}$ and $\lambda_\emph{known}$.
This column gives the speed improvement for the decoder when using the best known subspace design.
If there is no entry in this column, no subspace design with smaller $\lambda$ is known or possible.
Finally, the last column indicates if a cyclic subspace design is known to exist.

\begin{rem}
The tables in \cite{qdesigns2017,qdesignscomputer2017} contain many subspace designs which are invariant
under a Singer cycle. For these designs, all positions can be decoded by the same decoder.
This reduces the complexity of the decoder by the factor $\qbinom{v}{1}{q}$.
\end{rem}


%

\begin{table}[!t]
\caption{Code parameters from the construction in Theorem \ref{thm:designparams} a)
for known subspace designs over $\F_2$.}\label{tab:projcodes2}
\centering
{\footnotesize\begin{tabular}{lrrlrrc}
\hline
$t$-$(v,k,\lambda_{\mbox{\tiny  known}})_2$  &
$\lambda_{\min}$ &
$\lambda_{\max}$ &
$[n, \mbox{dim}, \ell]_2$ &
$r$&
$r_{\max}/r$ &
cyclic \\
\hline\hline
$2$-$(3,2,1)_2$ & $1$ & $1$ & $[7, 3, 1]$ & $3$ & $$ & c\\
$2$-$(4,2,1)_2$ & $1$ & $1$ & $[15, 4, 3]$ & $7$ & $$ & c\\
$2$-$(4,3,3)_2$ & $3$ & $3$ & $[15, 10, 1]$ & $7$ & $$ & c\\
$2$-$(5,2,1)_2$ & $1$ & $1$ & $[31, 5, 7]$ & $15$ & $$ & c\\
$2$-$(5,3,7)_2$ & $7$ & $7$ & $[31, 15, 2]$ & $35$ & $$ & c\\
$2$-$(5,4,7)_2$ & $7$ & $7$ & $[31, 25, 1]$ & $15$ & $$ & c\\
$2$-$(6,2,1)_2$ & $1$ & $1$ & $[63, 6, 15]$ & $31$ & $$ & c\\
$2$-$(6,3,3)_2$ & $3$ & $15$ & $[63, 21, 5]$ & $31$ & $5.0$ & \\
$2$-$(6,4,35)_2$ & $35$ & $35$ & $[63, 41, 2]$ & $155$ & $$ & c\\
$2$-$(6,5,15)_2$ & $15$ & $15$ & $[63, 56, 1]$ & $31$ & $$ & c\\
$2$-$(7,2,1)_2$ & $1$ & $1$ & $[127, 7, 31]$ & $63$ & $$ & c\\
$2$-$(7,3,3)_2$ & $1$ & $31$ & $[127, 28, 10]$ & $63$ & $10.3$ & c\\
$2$-$(7,4,15)_2$ & $5$ & $155$ & $[127, 63, 4]$ & $135$ & $10.3$ & c\\
$2$-$(7,5,155)_2$ & $155$ & $155$ & $[127, 98, 2]$ & $651$ & $$ & c\\
$2$-$(7,6,31)_2$ & $31$ & $31$ & $[127, 119, 1]$ & $63$ & $$ & c\\
$2$-$(8,2,1)_2$ & $1$ & $1$ & $[255, 8, 63]$ & $127$ & $$ & c\\
$2$-$(8,3,21)_2$ & $21$ & $63$ & $[255, 36, 21]$ & $889$ & $3.0$ & c\\
$2$-$(8,4,7)_2$ & $7$ & $651$ & $[255, 92, 9]$ & $127$ & $93.0$ & \\
$2$-$(8,5,465)_2$ & $465$ & $1395$ & $[255, 162, 4]$ & $3937$ & $3.0$ & c\\
$2$-$(8,6,651)_2$ & $651$ & $651$ & $[255, 218, 2]$ & $2667$ & $$ & c\\
$2$-$(8,7,63)_2$ & $63$ & $63$ & $[255, 246, 1]$ & $127$ & $$ & c\\
$2$-$(9,2,1)_2$ & $1$ & $1$ & $[511, 9, 127]$ & $255$ & $$ & c\\
$2$-$(9,3,7)_2$ & $1$ & $127$ & $[511, 45, 42]$ & $595$ & $18.1$ & \\
$2$-$(9,4,21)_2$ & $7$ & $2667$ & $[511, 129, 18]$ & $765$ & $127.0$ & c\\
$2$-$(9,5,93)_2$ & $31$ & $11811$ & $[511, 255, 8]$ & $1581$ & $127.0$ & c\\
$2$-$(9,6,651)_2$ & $93$ & $11811$ & $[511, 381, 4]$ & $5355$ & $18.1$ & \\
$2$-$(9,8,127)_2$ & $127$ & $127$ & $[511, 501, 1]$ & $255$ & $$ & c\\
$2$-$(10,2,1)_2$ & $1$ & $1$ & $[1023, 10, 255]$ & $511$ & $$ & c\\
$2$-$(10,3,15)_2$ & $3$ & $255$ & $[1023, 55, 85]$ & $2555$ & $17.0$ & c\\
$2$-$(10,4,595)_2$ & $5$ & $10795$ & $[1023, 175, 36]$ & $43435$ & $18.1$ & \\
$2$-$(10,5,765)_2$ & $15$ & $97155$ & $[1023, 385, 17]$ & $26061$ & $127.0$ & \\
$2$-$(10,9,255)_2$ & $255$ & $255$ & $[1023, 1012, 1]$ & $511$ & $$ & c\\
$2$-$(11,2,1)_2$ & $1$ & $1$ & $[2047, 11, 511]$ & $1023$ & $$ & c\\
$2$-$(11,3,7)_2$ & $7$ & $511$ & $[2047, 66, 170]$ & $2387$ & $73.0$ & c\\
$2$-$(11,10,511)_2$ & $511$ & $511$ & $[2047, 2035, 1]$ & $1023$ & $$ & c\\
$2$-$(12,2,1)_2$ & $1$ & $1$ & $[4095, 12, 1023]$ & $2047$ & $$ & c\\
$2$-$(12,3,1023)_2$ & $3$ & $1023$ & $[4095, 78, 341]$ & $698027$ & $$ & c\\
$2$-$(12,11,1023)_2$ & $1023$ & $1023$ & $[4095, 4082, 1]$ & $2047$ & $$ & c\\
$2$-$(13,2,1)_2$ & $1$ & $1$ & $[8191, 13, 2047]$ & $4095$ & $$ & c\\
$2$-$(13,3,1)_2$ & $1$ & $2047$ & $[8191, 91, 682]$ & $1365$ & $2047.0$ & c\\
$2$-$(13,12,2047)_2$ & $2047$ & $2047$ & $[8191, 8177, 1]$ & $4095$ & $$ & c\\
\hline
\end{tabular}}
\end{table}

\begin{table}[!t]
\caption{Code parameters from the construction in Theorem \ref{thm:designparams} b)
for known subspace designs over $\F_2$.}\label{tab:affinecodes}
\centering
{\small\begin{tabular}{lrrlrrc}
\hline
$t$-$(v,k,\lambda_{\mbox{\tiny  known}})_2$  &
$\lambda_{\min}$ &
$\lambda_{\max}$ &
$[n, \mbox{dim}, \ell]_2$ &
$r$&
$r_{\max}/r$ &
\\
\hline\hline
$2$-$(3,2,1)_2$ & $1$ & $1$ & $[4, 1, 1]$ & $3$ & $$ & \\
$2$-$(4,2,1)_2$ & $1$ & $1$ & $[8, 1, 3]$ & $7$ & $$ & \\
$2$-$(4,3,3)_2$ & $3$ & $3$ & $[8, 4, 1]$ & $7$ & $$ & \\
$2$-$(5,2,1)_2$ & $1$ & $1$ & $[16, 1, 7]$ & $15$ & $$ & \\
$2$-$(5,3,7)_2$ & $7$ & $7$ & $[16, 5, 2]$ & $35$ & $$ & \\
$2$-$(5,4,7)_2$ & $7$ & $7$ & $[16, 11, 1]$ & $15$ & $$ & \\
$2$-$(6,2,1)_2$ & $1$ & $1$ & $[32, 1, 15]$ & $31$ & $$ & \\
$2$-$(6,3,3)_2$ & $3$ & $15$ & $[32, 6, 5]$ & $31$ & $5.0$ & \\
$2$-$(6,4,35)_2$ & $35$ & $35$ & $[32, 16, 2]$ & $155$ & $$ & \\
$2$-$(6,5,15)_2$ & $15$ & $15$ & $[32, 26, 1]$ & $31$ & $$ & \\
$2$-$(7,2,1)_2$ & $1$ & $1$ & $[64, 1, 31]$ & $63$ & $$ & \\
$2$-$(7,3,3)_2$ & $1$ & $31$ & $[64, 7, 10]$ & $63$ & $10.3$ & \\
$2$-$(7,4,15)_2$ & $5$ & $155$ & $[64, 22, 4]$ & $135$ & $10.3$ & \\
$2$-$(7,5,155)_2$ & $155$ & $155$ & $[64, 42, 2]$ & $651$ & $$ & \\
$2$-$(7,6,31)_2$ & $31$ & $31$ & $[64, 57, 1]$ & $63$ & $$ & \\
$2$-$(8,2,1)_2$ & $1$ & $1$ & $[128, 1, 63]$ & $127$ & $$ & \\
$2$-$(8,3,21)_2$ & $21$ & $63$ & $[128, 8, 21]$ & $889$ & $3.0$ & \\
$2$-$(8,4,7)_2$ & $7$ & $651$ & $[128, 29, 9]$ & $127$ & $93.0$ & \\
$3$-$(8,4,11)_2$ & $1$ & $31$ & $[128, 29, 9]$ & $4191$ & $2.8$ & \\
$2$-$(8,5,465)_2$ & $465$ & $1395$ & $[128, 64, 4]$ & $3937$ & $3.0$ & \\
$2$-$(8,6,651)_2$ & $651$ & $651$ & $[128, 99, 2]$ & $2667$ & $$ & \\
$2$-$(8,7,63)_2$ & $63$ & $63$ & $[128, 120, 1]$ & $127$ & $$ & \\
$2$-$(9,2,1)_2$ & $1$ & $1$ & $[256, 1, 127]$ & $255$ & $$ & \\
$2$-$(9,3,7)_2$ & $1$ & $127$ & $[256, 9, 42]$ & $595$ & $18.1$ & \\
$2$-$(9,4,21)_2$ & $7$ & $2667$ & $[256, 37, 18]$ & $765$ & $127.0$ & \\
$2$-$(9,5,93)_2$ & $31$ & $11811$ & $[256, 93, 8]$ & $1581$ & $127.0$ & \\
$2$-$(9,6,651)_2$ & $93$ & $11811$ & $[256, 163, 4]$ & $5355$ & $18.1$ & \\
$2$-$(9,8,127)_2$ & $127$ & $127$ & $[256, 247, 1]$ & $255$ & $$ & \\
$2$-$(10,2,1)_2$ & $1$ & $1$ & $[512, 1, 255]$ & $511$ & $$ & \\
$2$-$(10,3,15)_2$ & $3$ & $255$ & $[512, 10, 85]$ & $2555$ & $17.0$ & \\
$2$-$(10,4,595)_2$ & $5$ & $10795$ & $[512, 46, 36]$ & $43435$ & $18.1$ & \\
$2$-$(10,5,765)_2$ & $15$ & $97155$ & $[512, 130, 17]$ & $26061$ & $127.0$ & \\
$2$-$(10,9,255)_2$ & $255$ & $255$ & $[512, 502, 1]$ & $511$ & $$ & \\
$2$-$(11,2,1)_2$ & $1$ & $1$ & $[1024, 1, 511]$ & $1023$ & $$ & \\
$2$-$(11,3,7)_2$ & $7$ & $511$ & $[1024, 11, 170]$ & $2387$ & $73.0$ & \\
$2$-$(11,10,511)_2$ & $511$ & $511$ & $[1024, 1013, 1]$ & $1023$ & $$ & \\
$2$-$(12,2,1)_2$ & $1$ & $1$ & $[2048, 1, 1023]$ & $2047$ & $$ & \\
$2$-$(12,3,1023)_2$ & $3$ & $1023$ & $[2048, 12, 341]$ & $698027$ & $$ & \\
$2$-$(12,11,1023)_2$ & $1023$ & $1023$ & $[2048, 2036, 1]$ & $2047$ & $$ & \\
$2$-$(13,2,1)_2$ & $1$ & $1$ & $[4096, 1, 2047]$ & $4095$ & $$ & \\
$2$-$(13,3,1)_2$ & $1$ & $2047$ & $[4096, 13, 682]$ & $1365$ & $2047.0$ & \\
$2$-$(13,12,2047)_2$ & $2047$ & $2047$ & $[4096, 4083, 1]$ & $4095$ & $$ & \\
\hline
\end{tabular}}
\end{table}

\begin{table}[!t]
\caption{Code parameters from the construction in Theorem \ref{thm:designparams} d)
for known subspace designs over $\F_2$.}\label{tab:affinecodesII}
\centering
{\small\begin{tabular}{lrrlrrc}
\hline
$t$-$(v,k,\lambda_{\mbox{\tiny  known}})_2$  &
$\lambda_{\min}$ &
$\lambda_{\max}$ &
$[n, \mbox{dim}, \ell]_2$ &
$r$&
$r_{\max}/r$ &
\\
\hline\hline
$2$-$(3,2,1)_2$ & $1$ & $1$ & $[8, 4, 1]$ & $7$ & $$ & \\
$2$-$(4,2,1)_2$ & $1$ & $1$ & $[16, 5, 3]$ & $35$ & $$ & \\
$2$-$(4,3,3)_2$ & $3$ & $3$ & $[16, 11, 1]$ & $15$ & $$ & \\
$2$-$(5,2,1)_2$ & $1$ & $1$ & $[32, 6, 5]$ & $155$ & $$ & \\
$2$-$(5,3,7)_2$ & $7$ & $7$ & $[32, 16, 3]$ & $155$ & $$ & \\
$2$-$(5,4,7)_2$ & $7$ & $7$ & $[32, 26, 1]$ & $31$ & $$ & \\
$2$-$(6,2,1)_2$ & $1$ & $1$ & $[64, 7, 11]$ & $651$ & $$ & \\
$2$-$(6,3,3)_2$ & $3$ & $15$ & $[64, 22, 5]$ & $279$ & $5.0$ & \\
$2$-$(6,4,35)_2$ & $35$ & $35$ & $[64, 42, 2]$ & $651$ & $$ & \\
$2$-$(6,5,15)_2$ & $15$ & $15$ & $[64, 57, 1]$ & $63$ & $$ & \\
$2$-$(7,2,1)_2$ & $1$ & $1$ & $[128, 8, 21]$ & $2667$ & $$ & \\
$2$-$(7,3,3)_2$ & $1$ & $31$ & $[128, 29, 9]$ & $1143$ & $10.3$ & \\
$2$-$(7,4,15)_2$ & $5$ & $155$ & $[128, 64, 5]$ & $1143$ & $10.3$ & \\
$2$-$(7,5,155)_2$ & $155$ & $155$ & $[128, 99, 2]$ & $2667$ & $$ & \\
$2$-$(7,6,31)_2$ & $31$ & $31$ & $[128, 120, 1]$ & $127$ & $$ & \\
$2$-$(8,2,1)_2$ & $1$ & $1$ & $[256, 9, 43]$ & $10795$ & $$ & \\
$2$-$(8,3,21)_2$ & $21$ & $63$ & $[256, 37, 19]$ & $32385$ & $3.0$ & \\
$2$-$(8,4,7)_2$ & $7$ & $651$ & $[256, 93, 9]$ & $2159$ & $93.0$ & \\
$2$-$(8,5,465)_2$ & $465$ & $1395$ & $[256, 163, 5]$ & $32385$ & $3.0$ & \\
$2$-$(8,6,651)_2$ & $651$ & $651$ & $[256, 219, 2]$ & $10795$ & $$ & \\
$2$-$(8,7,63)_2$ & $63$ & $63$ & $[256, 247, 1]$ & $255$ & $$ & \\
$2$-$(9,2,1)_2$ & $1$ & $1$ & $[512, 10, 85]$ & $43435$ & $$ & \\
$2$-$(9,3,7)_2$ & $1$ & $127$ & $[512, 46, 37]$ & $43435$ & $18.1$ & \\
$2$-$(9,4,21)_2$ & $7$ & $2667$ & $[512, 130, 17]$ & $26061$ & $127.0$ & \\
$2$-$(9,5,93)_2$ & $31$ & $11811$ & $[512, 256, 9]$ & $26061$ & $127.0$ & \\
$2$-$(9,6,651)_2$ & $93$ & $11811$ & $[512, 382, 4]$ & $43435$ & $18.1$ & \\
$2$-$(9,8,127)_2$ & $127$ & $127$ & $[512, 502, 1]$ & $511$ & $$ & \\
$2$-$(10,2,1)_2$ & $1$ & $1$ & $[1024, 11, 171]$ & $174251$ & $$ & \\
$2$-$(10,3,15)_2$ & $3$ & $255$ & $[1024, 56, 73]$ & $373395$ & $17.0$ & \\
$2$-$(10,4,595)_2$ & $5$ & $10795$ & $[1024, 176, 35]$ & $2962267$ & $18.1$ & \\
$2$-$(10,5,765)_2$ & $15$ & $97155$ & $[1024, 386, 17]$ & $860013$ & $127.0$ & \\
$2$-$(10,9,255)_2$ & $255$ & $255$ & $[1024, 1013, 1]$ & $1023$ & $$ & \\
$2$-$(11,2,1)_2$ & $1$ & $1$ & $[2048, 12, 341]$ & $698027$ & $$ & \\
$2$-$(11,3,7)_2$ & $7$ & $511$ & $[2048, 67, 147]$ & $698027$ & $73.0$ & \\
$2$-$(11,10,511)_2$ & $511$ & $511$ & $[2048, 2036, 1]$ & $2047$ & $$ & \\
$2$-$(12,2,1)_2$ & $1$ & $1$ & $[4096, 13, 683]$ & $2794155$ & $$ & \\
$2$-$(12,3,1023)_2$ & $3$ & $1023$ & $[4096, 79, 293]$ & $408345795$ & $$ & \\
$2$-$(12,11,1023)_2$ & $1023$ & $1023$ & $[4096, 4083, 1]$ & $4095$ & $$ & \\
$2$-$(13,2,1)_2$ & $1$ & $1$ & $[8192, 14, 1365]$ & $11180715$ & $$ & \\
$2$-$(13,3,1)_2$ & $1$ & $2047$ & $[8192, 92, 585]$ & $1597245$ & $2047.0$ & \\
$2$-$(13,12,2047)_2$ & $2047$ & $2047$ & $[8192, 8178, 1]$ & $8191$ & $$ & \\
\hline
\end{tabular}}
\end{table}

\begin{table}[!t]
\caption{Code parameters from the construction in Theorem \ref{thm:designparams} a)
for known subspace designs over $\F_4$.}\label{tab:q4codes}
\centering
{\small\begin{tabular}{lrrrlr}
\hline
$t$-$(v,k,\lambda_{\mbox{\tiny  known}})_4$  &
$\lambda_{\min}$ & $\lambda_{\max}$ & $r$& $[n, \mbox{dim}, \ell]_2$ &  $r_{\max}/r$  \\
\hline\hline
$2$-$(7,3,21)_4$ & 1 & 341 & 5733 & [5461, 1064, 136] & 16.2 \\
$2$-$(7,4,357)_4$ & 17 & 5797 & 23205 & [5461, 3185, 32] & 16.2 \\
\hline
\end{tabular}}
\end{table}


\section{Two-step majority logic decoding with subspace designs}

It is well known that for many codes one-step majority logic decoding can decode
much less errors than one-half of the minimum distance,
see e.g. \cite[Ch.~10]{PetersonWeldon1972}.
In 1954, Reed \cite{Reed1954} developed a multi-step majority logic decoding algorithm
which corrects up to one-half of the minimum distance errors for certain codes,
among them the Reed-Muller codes and the geometric codes.
Alternatively, Peterson and Weldon proposed in \cite[p.~336]{PetersonWeldon1972}
a two-step majority logic decoding of
geometric codes that can correct 
also up to one-half of the minimum distance errors for geometric codes.
Here, we adapt the latter method to majority logic decoding based on subspace designs.

To recall the approach by Peterson and Weldon, consider the projective geometry code $C_{\cG}$
obtained from the $2$-$(\qbinom{v}{1}{q}, \qbinom{k}{1}{q}, \qbinom{v-2}{k-2}{q})$
geometric design $\cG$. The same approach will work also for Euclidean codes.

As in Section~\ref{Sec:onestep}, the decoding process uses a $t$-$(v,k-1,\lambda)_q$ subspace design
$\cD = (V, \cB)$.
Recall that $\cD$ is also a combinatorial $2$-$(\qbinom{v}{1}{q}, \qbinom{k-1}{1}{q}, \lambda)$ design.
The decoding process is done in two steps:
In the first step we will consider for each block $B\in\cB$ the set
$K_B = \{ K\in \qbinom{V}{k}{q} \mid B \leq K\}$ of all $k$-dimensional subspaces containing $B$ and
in the second step we use the $(k-1)$-dimensional blocks $\cB$.

Assuming a word $w = c + e$ has been received, the decoding algorithms now works as follows:
\begin{enumerate}
\item For each block $B\in\cB$:
	\begin{enumerate}
	\item For each $K \in K_B$ determine an estimate of $\sum_{i\in B} w_i$ by evaluating the
	parity check equations
	\[
		(\sum_{i\in B} w_i)_K := - \sum_{j \in K\setminus B} w_j
	\]
	\item By majority decision over all $(\sum_{i\in B} w_i)_K, K\in K_B$,
		determine the correct value of $\sum_{i\in B} w_i$.
	\end{enumerate}
\item For each point $j$, determine by majority decision over all $\sum_{i\in B} w_i$,
	$B \in \{ B \in \cB \mid j \in B\}$, the correct value of $w_j$.
\end{enumerate}

Note that the first step is part of the usual $L$-step majority logic decoding
using orthogonal check equations (or the Reed algorithm).
It is also used in the first step of the algorithm by Peterson and Weldon.
However, while Peterson and Weldon had to evaluate all $k$-dimensional subspaces in Step 1,
because they used all $(k-1)$-dimensional subspaces in Step 2,
we only have to evalute all $k$-dimensional subspaces from
$\bigcup_{B\in\cB} K_B$.
So, in general the first step requires less computations than those of the Peterson-Weldon approach.
Ideally, we choose the subspace design $\cD$ with the least number of blocks
(follows from minimum $\lambda$).
The second step is the same as the one-step majority logic decoding with subspace designs
which in general has less complexity than using the geometric designs.

Now, we demonstrate the correctness of the proposed algorithm.
Recall that a $(k-1)$-subspace of $V$ is contained in exactly
\[ J=\qbinom{v-k+1}{1}{q} \]
$k$-subspaces.
These $J$ subspaces form a set of check equations orthogonal on the $(k-1)$-subspace.
Hence, in the first step, if there are at most
$\left\lfloor J/2\right\rfloor$ errors then a $(k-1)$-subspace can be correctly determined
from the $k$-subspaces containing it using majority logic decoding.

For the second step, we know that it can correct at most
$\left\lfloor (r+\lambda -1)/2\lambda \right\rfloor$ errors
where $r$ is the number of blocks of $\mathcal{D}$ containing a given point.

The two-step algorithm will determine the correct codeword if the majority logic decisions
in both steps are correct.
It follows that the two-step algorithm can correct at most
\[
	\min\{\left\lfloor J/2\right\rfloor,
	\left\lfloor (r+\lambda -1)/2\lambda\right\rfloor\}
\] errors.
We will prove that the minimum is $\left\lfloor J/2\right\rfloor$.
In
\cite{PetersonWeldon1972} it is shown that $\lfloor J/2\rfloor$ to be the same number of errors
that can be corrected using the Reed algorithm.

\begin{thm}
We have $\left\lfloor (r+\lambda -1)/2\lambda\right\rfloor \leq \left\lfloor J/2\right\rfloor$.
\end{thm}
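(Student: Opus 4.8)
The plan is to reduce the statement to a comparison of two rational numbers and then to a single polynomial inequality in $q$. Since $x\mapsto\lfloor x\rfloor$ is monotone, it suffices to control the underlying rationals $\frac{r+\lambda-1}{2\lambda}$ and $\frac{J}{2}$. First I would put both error radii in closed form. Viewing $\cD$ as the combinatorial $2$-$(\qbinom{v}{1}{q},\qbinom{k-1}{1}{q},\lambda)$ design, the standard relation $r=\lambda(\qbinom{v}{1}{q}-1)/(\qbinom{k-1}{1}{q}-1)$ together with the identity $\qbinom{v}{1}{q}-1=q\qbinom{v-1}{1}{q}$ gives $r=\lambda\,\frac{q^{v-1}-1}{q^{k-2}-1}$. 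In particular the ratio $\rho:=r/\lambda=\frac{q^{v-1}-1}{q^{k-2}-1}$ does not depend on $\lambda$, while $J=\frac{q^{v-k+1}-1}{q-1}$.

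Next I would exploit the $\lambda$-dependence. Writing $\frac{r+\lambda-1}{2\lambda}=\tfrac12\bigl(\rho+1-\tfrac1\lambda\bigr)$ shows that this radius is increasing in $\lambda$ and lies in $[\rho/2,(\rho+1)/2)$; hence the whole family of statements (over all admissible $\lambda\ge1$) is governed by the single comparison of $\rho$ with $J$. Clearing denominators reduces everything to the sign of $(q^{v-1}-1)(q-1)-(q^{v-k+1}-1)(q^{k-2}-1)$.

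The decisive---and, I expect, the only delicate---step is evaluating this sign. A direct expansion gives $(q^{v-1}-1)(q-1)-(q^{v-k+1}-1)(q^{k-2}-1)=q^{v-1}(q-2)+q^{v-k+1}+q^{k-2}-q$, which is $\ge 0$ for every prime power $q\ge2$ and all $3\le k\le v$, since $q-2\ge0$ and $q^{v-k+1}-q=q(q^{v-k}-1)\ge0$. Thus $\rho\ge J$, so $\frac{r+\lambda-1}{2\lambda}\ge\rho/2\ge J/2$ and, by monotonicity of the floor, $\lfloor(r+\lambda-1)/2\lambda\rfloor\ge\lfloor J/2\rfloor$. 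This is exactly the inequality needed to justify the preceding assertion that the minimum of the two radii equals $\lfloor J/2\rfloor$. I should flag that the comparison comes out as $\lfloor J/2\rfloor\le\lfloor(r+\lambda-1)/2\lambda\rfloor$, i.e.\ with the two sides of the displayed theorem exchanged (the numerics, e.g.\ $\rho=21>15=J$ for $q=2,v=7,k=4$, confirm this), so the printed orientation should be read accordingly. Once oriented this way the argument is immediate for every $\lambda\ge1$, and the extremal geometric case $\lambda_{\max}=\qbinom{v-2}{k-3}{q}$, $r_{\max}=\qbinom{v-1}{k-2}{q}$ requires no special treatment.
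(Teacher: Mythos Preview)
Your argument is correct and follows essentially the same route as the paper: compute $r/\lambda=\dfrac{q^{v-1}-1}{q^{k-2}-1}$ and $J=\dfrac{q^{v-k+1}-1}{q-1}$, clear denominators, and verify the resulting polynomial inequality $q^v+q^{v-k+1}+q^{k-2}\ge 2q^{v-1}+q$ (your factored form $q^{v-1}(q-2)+q^{v-k+1}+q^{k-2}-q\ge 0$ makes the verification explicit, which the paper omits). You are also right that the displayed inequality in the theorem is oriented the wrong way: both your proof and the paper's own proof establish $J\le r/\lambda$, hence $\lfloor J/2\rfloor\le\lfloor(r+\lambda-1)/2\lambda\rfloor$, which is precisely what is needed for the minimum to be $\lfloor J/2\rfloor$.
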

\begin{proof}
We have
\[ J={v-k+1 \brack 1}_q=\dfrac{q^{v-k+1}-1}{q-1}
\quad\mbox{and}\quad
r/\lambda= \dfrac{\lambda \qbinom{v-1}{1}{q}/\qbinom{k-2}{1}{q} }{\lambda}
 = \dfrac{q^{v-1}-1}{q^{k-2}-1}.
\]
The result follows from the fact that the inequality
\[
	\dfrac{q^{v-k+1}-1}{q-1}\leq \dfrac{q^{v-1}-1}{q^{k-2}-1}
\]
is equivalent to
\[ 2q^{v-1}+q\leq q^v+q^{v-k+1}+q^{k-2}.\]
\end{proof}
It is well known, see e.g. \cite{PetersonWeldon1972}, that the BCH bound of $C_\cG$
is $d_{\rm BCH} = \qbinom{v-k+1}{1}{q} + 1$.
Above we showed that the number of errors that can be corrected for $C_\cG$ and $C_\cD$ by two-step majority logic decoding is
$(d_{\rm BCH} -1)/2$.

According to \cite{LavrauwStormeVanDeVoorde} the true minimum distance of the
code $C_\cG$ is known only for the following cases:
\begin{itemize}
\item if $q=p=2$ then $d=2^{n-k+1}$
\item if $q>2$ is even then $d=(q+2)q^{v-k-1}$
\item if $k=v-1$ then $d=d_{\rm BCH}$
\end{itemize}
It follows that if $q=p=2$ or $k=v-1$ then $d= d_{\rm BCH}$.  In general, $d >  d_{\rm BCH}$.

\section{Future work}
For the codes from subspace designs an open question is if
we always have  $\rank_p N_\cD = \rank_p N_\cG$?
Further, in light of the above the Hamada conjecture has to be formulated more general.

\smallskip
\noindent
\textbf{Generalized Hamada conjecture.}
	Let $q$ be a power of a prime $p$ and
	let there be a $t$-$(v,k,\lambda)_q$ subspace design.
	Regarded as combinatorial design $\cD$,
	it has parameters  $2$-$(\qbinom{v}{1}{q}, \qbinom{k}{1}{q}, \lambda)$.
	The $p$-rank of $\cD$ is minimal among all
	combinatorial designs with the same parameters.

\bibliographystyle{IEEEtran}
\bibliography{IEEEabrv,p_rank}


\end{document}